\begin{document}
\newtheorem{theorem}{Theorem}[section]
\newtheorem{lemma}[theorem]{Lemma}
\newtheorem{definition}[theorem]{Definition}
\newtheorem{claim}[theorem]{Claim}
\newtheorem{example}[theorem]{Example}
\newtheorem{remark}[theorem]{Remark}
\newtheorem{proposition}[theorem]{Proposition}
\newtheorem{corollary}[theorem]{Corollary}

\title{Fixed point theorem for reflexive Banach spaces and uniformly convex non positively curved metric spaces}
\author{Izhar Oppenheim}
\affil{Department of Mathematics\\
 The Ohio State University  \\
 Columbus, OH 43210, USA \\
E-mail: izharo@gmail.com}

\maketitle
\textbf{Abstract}. This article generalizes the work of Ballmann  and \'{S}wiatkowski to the case of Reflexive Banach spaces and uniformly convex Busemann spaces, thus giving a new fixed point criterion for groups acting on simplicial complexes.   \\ \\
\textbf{Mathematics Subject Classification (2010)}. Primary 20F65 \\ \\
\textbf{Keywords}. Fixed point property, Banach space, Busemann space, simplicial complex, Laplacian

\section{Introduction}
For a finite graph $(V,E)$ the Laplacian is a positive operator defined on functions $f:V \rightarrow \mathbb{R}$. One can generalize the definition of the Laplacian for a simplicial complex $X$ of any dimension and for such a complex the Laplacian is again a positive operator. \\ \\

Ballmann and \'{S}wiatkowski in \cite{BS} and independently \.{Z}uk's in \cite{Zuk} used the geometric information given by the Laplacian eigenvalues to give criteria
for the vanishing of cohomologies of a group $\Gamma$ acting on a simplicial complex. The most famous result of this type is the \.{Z}uk criterion which states that a group acting geometrically (i.e. cocompactly and proper discontinuously) on a $2$-dimensional simplicial complex has property (T) if the smallest positive Laplacian eigenvalues at the link of every vertex is large enough. It is well known that in the above setting, property (T) is equivalent to a fixed point property for action by isometries on a Real Hilbert space (see for instance \cite{BHV}). \\ \\

In this article, we generalize the \.{Z}uk criterion to reflexive Banach spaces and uniformly convex non positively curved Busemann spaces and get a fixed point criterion for those spaces relaying on the geometry of the links of vertices. The method that we use is basically taken from Gromov in \cite{Grom} (3.11), but we improve it so it doesn't require any scaling limit arguments (and generalize the form of the energy function). 

\textbf{Structure of the paper.} The first section gathers needed results about groups acting on simplicial complexes, Uniformly convex Busemann non positively curved spaces and Reflexive Banach spaces. The second section contains the main theorem and its proof.  

\section{Framework and Preliminaries}

\subsection{General Settings}

Throughout this paper $X$ is a simplicial complex of dimension $n \geq 2$ such that all the links of $X$ are connected and we assume that the links of all the vertices of $X$ are finite. Also $\Gamma$ is a locally compact, properly discontinuous, unimodular group of automorphisms of $X$ acting cocompactly on $X$.

Following \cite{BS} we introduce the following notations:
\begin{enumerate}
\item For $0\leq k\leq n$, denote by $\Sigma(k)$ the set of ordered $k$-simplices (i.e. $\sigma \in \Sigma(k)$ is and ordered $k+1$-tuple of vertices) and choose a set $\Sigma(k,\Gamma)\subseteq\Sigma(k)$ of representatives
of $\Gamma$-orbits.
\item For a simplex $\sigma\in\Sigma(k)$, denote by $\Gamma_{\sigma}$
the stabilizer of $\sigma$ and by $\vert \Gamma_{\sigma} \vert$ the measure of $\Gamma_{\sigma}$ with respect to the Haar measure.  

\end{enumerate}

The following proposition is taken from \cite{BS}, \cite{DJ1}:

\begin{proposition}
\cite[Lemma 1.3]{BS}, \cite[Lemma 3.3]{DJ1} For $0\leq l<k\leq n$, let $f=f(\tau,\sigma)$ be
a $\Gamma$-invariant function on the set of pairs $\left(\tau,\sigma\right)$,
where $\tau$ is an ordered $l$-simplex and $\sigma$ is an ordered
$k$-simplex with $\tau\subset\sigma$ Then \[
\sum_{\sigma\in\Sigma(k,\Gamma)}\sum_{\begin{array}{c}
{\scriptstyle \tau\in\Sigma(l)}\\
{\scriptstyle \tau\subset\sigma}\end{array}}\frac{f(\tau,\sigma)}{\left|\Gamma_{\sigma}\right|}=\sum_{\tau\in\Sigma(l,\Gamma)}\sum_{\begin{array}{c}
{\scriptstyle \sigma\in\Sigma(k)}\\
{\scriptstyle \tau\subset\sigma}\end{array}}\frac{f(\tau,\sigma)}{\left|\Gamma_{\tau}\right|}\]

\end{proposition}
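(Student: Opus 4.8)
The strategy is a double-counting argument on the set $P$ of all incident pairs $(\tau,\sigma)$ with $\tau\in\Sigma(l)$, $\sigma\in\Sigma(k)$ and $\tau\subset\sigma$. This set carries a $\Gamma$-action, and by hypothesis $f$ is constant on $\Gamma$-orbits in $P$. The plan is to show that both sides of the claimed identity equal the single weighted orbit count
\[
 m(f)\;=\;\sum_{[(\tau,\sigma)]\in\Gamma\backslash P}\frac{f(\tau,\sigma)}{\left|\Gamma_{(\tau,\sigma)}\right|},\qquad \Gamma_{(\tau,\sigma)}:=\Gamma_\tau\cap\Gamma_\sigma .
\]
This expression makes sense: $\Gamma$ acts cocompactly, so there are finitely many $\Gamma$-orbits of simplices in each dimension; $\Gamma$ acts continuously on the discrete set $\Sigma(k)$, so each $\Gamma_\sigma$ is open, and properly discontinuity makes it compact, hence of finite positive Haar measure; and local finiteness of $X$ makes the set of cofaces of any simplex finite. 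The only quantitative input I will use is the orbit--stabilizer identity $\left|\Gamma_\eta\cdot x\right|\cdot\left|(\Gamma_\eta)_x\right|=\left|\Gamma_\eta\right|$ for the action of a compact stabilizer $\Gamma_\eta$ on a finite set, which follows from disintegrating Haar measure along the orbit map.

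First I would evaluate the left-hand side by projecting $P\to\Sigma(k)$, $(\tau,\sigma)\mapsto\sigma$. Since simplices are \emph{ordered}, an element of $\Gamma_\sigma$ fixes every vertex of $\sigma$ and hence every sub-tuple $\tau\subset\sigma$; thus $\Gamma_\sigma\subseteq\Gamma_\tau$, so $\Gamma_{(\tau,\sigma)}=\Gamma_\sigma$, and $\Gamma_\sigma$ acts trivially on the faces of $\sigma$. I then check that the projection induces a bijection between $\Gamma\backslash P$ and the set of pairs $(\tau,\sigma)$ with $\sigma\in\Sigma(k,\Gamma)$ and $\tau\subset\sigma$: every orbit contains such a pair (translate the $k$-simplex to its chosen representative), and two such pairs in one orbit must coincide (the translating element lies in $\Gamma_\sigma$, which fixes $\tau$). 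Substituting $\Gamma_{(\tau,\sigma)}=\Gamma_\sigma$ into $m(f)$ along this bijection gives exactly $\sum_{\sigma\in\Sigma(k,\Gamma)}\sum_{\tau\subset\sigma}f(\tau,\sigma)/\left|\Gamma_\sigma\right|$.

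Next I would evaluate the right-hand side by projecting $P\to\Sigma(l)$, $(\tau,\sigma)\mapsto\tau$. Every $\Gamma$-orbit in $P$ contains a pair with $\tau\in\Sigma(l,\Gamma)$, and for fixed such $\tau$ the $\Gamma$-orbits lying over it correspond bijectively to the $\Gamma_\tau$-orbits $\mathcal{Q}$ on the (finite) set $\{\sigma\in\Sigma(k):\tau\subset\sigma\}$ of cofaces of $\tau$ --- but now $\Gamma_\tau$ need not act trivially. For an orbit $\mathcal{Q}$ with representative $\sigma_0$, invariance of $f$ gives $f(\tau,\sigma)=f(\tau,\sigma_0)$ for every $\sigma\in\mathcal{Q}$, while orbit--stabilizer, together with $(\Gamma_\tau)_{\sigma_0}=\Gamma_\tau\cap\Gamma_{\sigma_0}=\Gamma_{\sigma_0}$, gives $\left|\mathcal{Q}\right|=\left|\Gamma_\tau\right|/\left|\Gamma_{\sigma_0}\right|$. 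Hence
\[
 \sum_{\substack{\sigma\in\Sigma(k)\\ \tau\subset\sigma}}\frac{f(\tau,\sigma)}{\left|\Gamma_\tau\right|}
 \;=\;\sum_{\mathcal{Q}}\left|\mathcal{Q}\right|\,\frac{f(\tau,\sigma_0)}{\left|\Gamma_\tau\right|}
 \;=\;\sum_{\mathcal{Q}}\frac{f(\tau,\sigma_0)}{\left|\Gamma_{\sigma_0}\right|},
\]
and summing over $\tau\in\Sigma(l,\Gamma)$ reassembles $m(f)$, using $\left|\Gamma_{(\tau,\sigma_0)}\right|=\left|\Gamma_{\sigma_0}\right|$. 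Comparing the two evaluations of $m(f)$ yields the proposition.

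The routine but slightly delicate point --- and essentially the only obstacle --- is the bookkeeping in the last step: checking that the correspondence between $\Gamma$-orbits in $P$ and pairs (representative $l$-simplex $\tau$, $\Gamma_\tau$-orbit of cofaces of $\tau$) is well defined and bijective, and that the Haar-measure orbit--stabilizer formula applies. Everything else is direct substitution. Alternatively one could avoid isolating $m(f)$ and transform the left-hand side into the right-hand side in one computation by rewriting the inner sum over all faces as a sum over $\Sigma(l,\Gamma)$-representatives and $\Gamma_\tau$-cosets; this is the route of \cite{BS}, \cite{DJ1} and amounts to the same calculation.
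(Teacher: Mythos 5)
The paper itself offers no proof of this proposition --- it is quoted verbatim from \cite[Lemma 1.3]{BS} and \cite[Lemma 3.3]{DJ1} --- so there is nothing internal to compare against; measured on its own terms, your double count over the incidence set $P$ is correct and is essentially the argument of the cited sources (your closing remark about transforming one side into the other directly is just the same computation with $m(f)$ left implicit). The structural facts you rely on are all right: because simplices are ordered, $\Gamma_\sigma$ fixes each vertex of $\sigma$, hence $\Gamma_\sigma\subseteq\Gamma_\tau$ and $\Gamma_{(\tau,\sigma)}=\Gamma_\sigma$; the fibre of $\Gamma\backslash P$ over a chosen representative $\sigma\in\Sigma(k,\Gamma)$ is the full (finite) set of faces since $\Gamma_\sigma$ acts trivially on them, while the fibre over a representative $\tau\in\Sigma(l,\Gamma)$ is the set of $\Gamma_\tau$-orbits of cofaces, each contributing $\left|\Gamma_\tau\right|/\left|\Gamma_{\sigma_0}\right|$ terms by orbit--stabilizer for the compact open stabilizer. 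The one standing hypothesis you never invoke but genuinely need is unimodularity of $\Gamma$: your two evaluations of $m(f)$ choose \emph{different} representatives of the same $\Gamma$-orbit of $P$ (one with $\sigma\in\Sigma(k,\Gamma)$, the other with $\tau\in\Sigma(l,\Gamma)$), so comparing them requires that $\left|\Gamma_{(\tau,\sigma)}\right|$ be constant along orbits, i.e.\ that $\left|\gamma\Gamma_\sigma\gamma^{-1}\right|=\left|\Gamma_\sigma\right|$ for all $\gamma$; for a left Haar measure this conjugation-invariance is exactly the vanishing of the modular function, which is why the paper assumes $\Gamma$ unimodular. Adding that one citation closes the argument; everything else is sound.
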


The reader should note, that from now on we will use the above proposition to change the order of summation without mentioning it explicitly.

\begin{definition}
A weight on $X$ is an equivariant function $m : \bigcup_{2 \geq k \geq 1} \Sigma(k) \rightarrow \mathbb{R}^+_{>0}$ such that:
\begin{enumerate}
\item For every $\tau = (v_0,...,v_k)$ and for every permutation $\sigma \in S_k$ we have $m ((v_0,...,v_k)) = m ((v_{\sigma (0)},...,v_{\sigma (k)})$.
\item There is a $C (m)$ such that for every $\tau \in \Sigma (1)$ we have the following equality
$$ \sum_{\sigma \in \Sigma (2), \tau \subset \sigma} m( \sigma ) = 3! C (m) m (\tau )$$
Where $\tau \subset \sigma$ means that all the vertices of $\tau$ are contained in $\sigma$ (with no regard to the ordering). 
\end{enumerate} 
\end{definition}

\begin{example}
In \cite{BS} the function $m$ was defined as: for every $\tau \in \Sigma (k)$, $m (\tau)$ is the number of (unordered) simplices of dimension $n$ that contain $\tau$. In that case, $C (m) = n-1$.   
\end{example}

\begin{remark}
There is a lot of freedom in our definition of the weight function. Without loss of generality, one can always normalize the weight function such that $C (m) = 1$. It is obvious that in the normalized case the function $m$ is determined by its values on $\Sigma (2)$. We chose not to normalize the weight function in this paper as a matter of convenience and so that the reader could easily compare our results to those proven in \cite{BS}.  
\end{remark}

\begin{definition}
Let $u\in\Sigma(0)$, denote by $X_{u}$
the \emph{link} of $u$ in $X$, that is, the subcomplex of dimension
$n-1$ consisting on simplices $\sigma=\left(w_{0},...,w_{k}\right)$
such that $\left\{u \right\} ,\left\{ w_{0},...,w_{k}\right\}$ are disjoint as sets
and $\left(u,w_{0},...,w_{k}\right)=u\sigma\in\Sigma(k+1)$.
As stated above, $X$ is locally finite which means that $X_u$ is a finite simplicial complex.
\begin{enumerate}
\item For $0\leq k\leq n-1$, denote by $\Sigma_{u}(k)$ the set of
ordered $k$-simplices.
\item For a simplex $\sigma\in\Sigma_{u}(k)$ denote by $m_{u}(\sigma) = m (u \sigma )$.
\end{enumerate}
\end{definition}

\subsection{Uniformly convex Busemann non positively curved spaces}
In this subsection we will give definitions and some results about uniformly convex Busemann non positively curved spaces spaces. \\

Let $(Z,d)$ be a unique geodesic complete metric space, i.e. between any two points $x,y \in Z$ there is a unique geodesic connecting $x$ and $y$. For $x,y \in Z$ and $0 \leq t \leq 1$ denote by $tx + (1-t)y$ the point on the geodesic connecting $x$ and $y$ such that
$$ d(x,tx+(1-t)y)= t d(x,y), d(y,tx+(1-t)y)= (1-t) d(x,y)$$
This is of course only a notation because $Z$ need not be a vector space. 

\begin{definition}
A uniquely geodesic metric\footnote{Uniquely geodesic means that every two points have a unique geodesic connecting them. From now on we will assume that our spaces are always uniquely geodesic.}  space $(Z,d)$ will be called uniformly convex if there is a map $\theta : (0, \infty) \times (0,2] \rightarrow (0,1]$ such that for every $r>0$ and every $\varepsilon \in (0,2]$, for every three points $a,x,y \in Z$ the following implication holds:
$$ \begin{array}{c}
d(a,x) \leq r \\
d(a,y) \leq r \\
d(x,y) \geq \varepsilon r
\end{array}  \Rightarrow d (\frac{1}{2} x + \frac{1}{2} y ,a) \leq (1-\theta (r,\varepsilon ) )r$$
and
$$ \forall \varepsilon \in (0,2],\theta (\varepsilon ) = \inf \lbrace \theta (r,\varepsilon ) : r>0 \rbrace >0$$
The map $\theta$ is called the modulus of uniform convexity.
\end{definition}   

\begin{remark}
The above definition is closely related to \cite{GR}[page 107] (it appears also in \cite{RS} and \cite{GKM}). The reader should note there are other (non equivalent) definitions for uniformly convex metric space - see for instance \cite{KL}.
\end{remark}

Examples of uniformly convex metric spaces:
\begin{enumerate}
\item Hilbert spaces.
\item $L^p$ spaces for $1 < p < \infty$.
\item $CAT (0)$ spaces - the modulus of convexity might depend on $r$ but it is bounded by the modulus of convexity of a Hilbert space (for the same $\varepsilon$). 
\end{enumerate}

The following proposition is stated and proven in \cite{GR} for the Banach case (see \cite{GR}[Theorem 2.1], and it is also mentioned there that the proof is the same for the genera case of a uniformly convex metric space (see \cite{GR}[Theorem 18.1]). We shall repeat the proof here for completeness.

\begin{proposition}
Let $(Z,d)$ be a complete uniquely geodesic uniformly convex metric space, then for closed convex bounded non empty sets $C_n \subset Z$ such that $C_{n+1} \subset C_n$, we have $\bigcap_{n=1}^\infty C_n \neq \emptyset$.
\end{proposition}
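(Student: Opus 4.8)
The plan is to mimic the classical Hilbert-space argument that a nested sequence of closed convex bounded sets has nonempty intersection, using uniform convexity as a substitute for the parallelogram law. First I would fix a basepoint $a \in Z$ and, for each $n$, consider $\rho_n = \inf\{ d(a,c) : c \in C_n \}$. Since $C_{n+1} \subset C_n$, the sequence $(\rho_n)$ is nondecreasing, and it is bounded above because the $C_n$ are bounded (say all contained in a ball of radius $R$ around $a$); hence $\rho_n \to \rho$ for some finite $\rho$. The goal is to produce a single point lying in every $C_n$; the natural candidate is a ``limit'' of near-minimizers, so the key is to show that any sequence of points $x_n \in C_n$ with $d(a,x_n) \to \rho$ is Cauchy, and then use completeness together with the fact that each $C_n$ is closed and the $x_m$ for $m \geq n$ all lie in $C_n$.

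The heart of the argument is the Cauchy estimate, and this is where uniform convexity enters. Suppose $x_n \in C_n$, $x_m \in C_m$ with $m \geq n$, both within distance $r_n := \max(\rho_n, \text{something slightly above } \rho)$ of $a$; more carefully, given $\delta > 0$ choose $N$ so that for $k \geq N$ we have $\rho \leq d(a,x_k) \leq \rho + \delta$ and $\rho_k \geq \rho - \delta$. Since $C_m$ is convex and $m \geq n \geq N$, both $x_n, x_m \in C_n$, so the midpoint $\frac{1}{2}x_n + \frac{1}{2}x_m$ lies in $C_n$, whence $d(a, \frac{1}{2}x_n + \frac{1}{2}x_m) \geq \rho_n \geq \rho - \delta$. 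On the other hand, if $d(x_n, x_m) \geq \varepsilon r$ for $r$ close to $\rho$, the uniform convexity inequality forces $d(a, \frac{1}{2}x_n + \frac{1}{2}x_m) \leq (1 - \theta(\varepsilon)) r$, which for $\delta$ small and $\rho > 0$ contradicts the previous lower bound once $\varepsilon$ is bounded below. Chasing the quantifiers, this shows $d(x_n,x_m)$ must be small, i.e. the sequence is Cauchy. (One must also handle the degenerate case $\rho = 0$ separately, but then $a$ itself, or the limit of the $x_n \to a$, already lies in each closed $C_n$; and if some $\rho_n = 0$ the point $a$ is a limit of points of $C_n$, hence in $C_n$.)

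Once $(x_n)$ is Cauchy, completeness gives $x_n \to x_\infty$. For each fixed $n$, the tail $(x_m)_{m \geq n}$ lies in $C_n$, which is closed, so $x_\infty \in C_n$; as $n$ was arbitrary, $x_\infty \in \bigcap_{n=1}^\infty C_n$, which is therefore nonempty.

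The main obstacle I anticipate is organizing the $\varepsilon$--$\delta$ bookkeeping cleanly: one needs to apply the modulus $\theta$ at a radius $r$ that is simultaneously an upper bound for $d(a,x_n)$ and $d(a,x_m)$ and close to $\rho$, and one needs $\rho > 0$ to get a genuine contradiction from the convexity inequality — so the case $\rho = 0$ (and the related case where the infimum $\rho_n$ is attained with value $0$, or where points of $C_n$ accumulate at $a$) must be treated on its own. The geometric content is entirely in the single implication defining uniform convexity, applied to the triple $(a, x_n, x_m)$; everything else is the standard completeness-plus-closedness wrap-up. Note that existence of midpoints is guaranteed since $Z$ is uniquely geodesic, and convexity of $C_n$ is exactly what keeps those midpoints inside $C_n$.
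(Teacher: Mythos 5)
Your proposal is correct and follows essentially the same route as the paper: fix a basepoint, let the distances to the $C_n$ increase to a limit $\rho$, and apply uniform convexity to the basepoint and two near-minimizers (whose midpoint stays in $C_n$ by convexity and nesting) to force near-minimizers to cluster, then finish with completeness and closedness. The paper packages the clustering as $\operatorname{diam}\bigl(C_n \cap \overline{B}(x,\rho+\tfrac{1}{n})\bigr) \to 0$ rather than as a Cauchy sequence, and handles the normalization $\varepsilon = d/(2(\rho+1))$ exactly as you anticipate in your ``bookkeeping'' remark, but these are the same argument.
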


\begin{proof}
Take arbitrary $x \in Z$. If $x \in \bigcap_{n=1}^\infty C_n$ we are done. If not there is some $N$, such that $d(x,C_N) >0$ (recall that $C_N$ is closed). Denote $r_n = d(x,C_n)$, then $\lbrace r_n \rbrace$ is an increasing non negative sequence which is bounded from above because $C_1$ is bounded. Denote $r = \lim r_n \geq r_N >0$. Define $D_n = C_n \cap \overline{B}(x, r + \frac{1}{n})$, by completeness it is enough to show that $diam (D_n) \rightarrow 0$, because then $\bigcap_{n=1}^\infty C_n \supseteq \bigcap_{n=1}^\infty D_n \neq \emptyset$. Assume toward contradiction that $diam (D_n) \rightarrow d >0$, then there is some $n_0$ such that for every $n > n_0$ we have $\frac{1}{n} < \frac{d}{2}$. For every $n > n_0$ we have points $x_n,y_n \in D_n$ such that 
$$d(x_n,y_n) > d - \dfrac{1}{n} > \dfrac{d}{2} = \dfrac{d}{2 (r+\dfrac{1}{n})}(r+\dfrac{1}{n}) \geq \dfrac{d}{2 (r+1)}(r+\dfrac{1}{n}) $$
and since $x_n,y_n \in D_n$ we have $d(x,x_n), d(x,y_n) \leq r + \frac{1}{n}$. 
Note that $\frac{d}{2} \leq r+1$ because $D_n \subseteq \overline{B}(x,r+1)$ and therefore $\frac{d}{2(r+1)} \leq 1$. By uniform convexity, for every $n > n_0$ we have
$$r_n \leq d(x, \frac{1}{2} x_n + \frac{1}{2} y_n ) \leq \left( 1- \theta (r+\frac{1}{n}, \dfrac{d}{2 (r+1)}) \right) (r+ \frac{1}{n}) \leq$$
$$ \leq \left( 1- \theta ( \dfrac{d}{2 (r+1)}) \right) (r+ \frac{1}{n})$$
we can take $n \rightarrow \infty$ and get that
$$ r \leq \left( 1- \theta (\dfrac{d}{2 (r+1)}) \right) r < r$$
which is a contradiction.
\end{proof}

The above proposition has a useful corollary (also taken from \cite{GR}[Proposition 18.2] with minor adaptations) :

\begin{corollary}
Let $(Z,d)$ be a complete uniquely geodesic uniformly convex metric space and let $f : Z \rightarrow \mathbb{R}^+$ be a quasi-convex function, i.e. 
$$ \forall 0 \leq t \leq 1, \forall x,y \in Z, f(tx+ (1-t)y) \leq \max \lbrace f(x), f(y) \rbrace$$
If there is a $c \in \mathbb{R}^+$ such that the set $\lbrace x : f(x) \leq c \rbrace$ is non empty and bounded, then $f$ has a minimum. Moreover, if $f$ is strictly quasi-convex, i.e.  
$$ \forall x,y \in Z,x \neq y, f(\dfrac{1}{2}x+ \dfrac{1}{2}y) < \max \lbrace f(x),f(y) \rbrace$$
then this minimum is unique.
\end{corollary}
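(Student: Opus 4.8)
The plan is to derive the existence of a minimizer by applying the nested intersection property (the previous Proposition) to a decreasing family of sublevel sets. First I would set $c_0 = \inf_{x \in Z} f(x)$, and note that $c_0 \leq c < \infty$, so $c_0$ is finite (it is clearly $\geq 0$ since $f$ takes values in $\mathbb{R}^+$). For each $n$ large enough that $c_0 + \frac{1}{n} \leq c$, define $S_n = \{ x : f(x) \leq c_0 + \frac{1}{n} \}$. By definition of infimum each $S_n$ is non-empty, and $S_{n+1} \subseteq S_n$; moreover $S_n \subseteq \{ x : f(x) \leq c \}$ for all such $n$, so each $S_n$ is bounded. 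The key structural point is that each $S_n$ is \emph{convex}: if $x, y \in S_n$ then by quasi-convexity $f(tx + (1-t)y) \leq \max\{ f(x), f(y)\} \leq c_0 + \frac{1}{n}$, so $tx + (1-t)y \in S_n$. Each $S_n$ is also closed, since $f$ is continuous — and here is the one place I must be slightly careful: quasi-convexity together with the sublevel sets being bounded does not a priori give continuity, so I would instead observe that a quasi-convex function on a geodesic space with bounded non-empty sublevel sets is automatically lower semicontinuous, or simply include continuity/lower semicontinuity of $f$ as part of the standing hypothesis (this is what is meant in \cite{GR}); lower semicontinuity suffices to make $S_n$ closed.

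With $\{S_n\}$ a decreasing sequence of non-empty closed convex bounded sets, the previous Proposition gives a point $x_0 \in \bigcap_{n} S_n$. Then $f(x_0) \leq c_0 + \frac{1}{n}$ for all $n$ large, hence $f(x_0) \leq c_0$, and since $c_0$ is the infimum, $f(x_0) = c_0$; thus $x_0$ is a global minimum.

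For uniqueness under strict quasi-convexity: suppose $x_0 \neq y_0$ both attain the minimum value $c_0$. Applying the strict inequality to the midpoint gives $f(\frac{1}{2}x_0 + \frac{1}{2}y_0) < \max\{ f(x_0), f(y_0)\} = c_0$, contradicting that $c_0 = \inf f$. Hence the minimizer is unique.

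The main obstacle — really the only subtle point — is the topological regularity of $f$ needed to conclude that the sublevel sets are closed: pure quasi-convexity is not enough, so the argument tacitly uses (lower semi)continuity of $f$, which in the intended applications (the energy-type functions built later in the paper) holds automatically. Everything else is a routine verification that the sublevel sets inherit convexity and boundedness and then a direct invocation of the nested-intersection Proposition.
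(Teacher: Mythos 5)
Your proof is essentially identical to the paper's: both set $c' = \inf f$, form the nested sublevel sets $C_n = \{x : f(x) \leq c' + \tfrac{1}{n}\}$, apply the nested-intersection Proposition, and derive uniqueness from the strict midpoint inequality. You are in fact more careful than the paper, which silently assumes the sublevel sets are closed without addressing the (lower semi)continuity of $f$ that this requires; your explicit flagging of that point is a genuine improvement, not a deviation.
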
 

\begin{proof}
Denote $c' = \inf \lbrace f(x) : x \in Z \rbrace \geq 0$, we shall show that $c'$ is the minimum of $f$. If $c' = c$ we are done because we know that $\lbrace x : f(x) \leq c \rbrace$ is non empty. Otherwise, there is an integer $n_0$ such that $\frac{1}{n_0} < c-c'$. Then for every $n \geq n_0$, define 
$$C_n = \lbrace x : f(x) \leq c'+\frac{1}{n} \rbrace$$
Those are bounded non empty convex sets (because $f$ is quasi-convex) such that $C_{n+1} \subset C_n$ and by the above proposition we get that $\bigcap_{n=n_0}^\infty C_n \neq \emptyset$ and for $x \in \bigcap_{n=n_0}^\infty C_n$ we get that $f(x) = c'$. If $f$ is strictly quasi-convex then for every two $x,y \in Z$ such that $f(x)=f(y)=c'$ we get that if $x \neq y$ then  
 $$f(\dfrac{1}{2}x+ \dfrac{1}{2}y) < \max \lbrace f(x),f(y) \rbrace = c'$$
which is a contradiction of the definition of $c'$ as the infimum. 
\end{proof}
 
\begin{definition}
A uniquely geodesic metric space $(Z,d)$ is said to be non positively curved in the sense of Busemann, if for every three points: $x,y,z \in Z$ one has
$$ d(\dfrac{1}{2}x+ \dfrac{1}{2}z,\dfrac{1}{2}y+ \dfrac{1}{2}z ) \leq \dfrac{1}{2} d(x,y)$$
\end{definition}

\begin{remark}
Some papers use the term hyperbolic for metric spaces that are non positively curved in the sense of Busemann (see \cite{RS}[Definition 2.1]). We use the term "non positively curved in the sense of Busemann" to avoid confusion with other non equivalent terms as Gromov hyerbolic metric spaces.
\end{remark}

Observe that the condition stated it the definition above is equivalent to the condition:
$$ \forall x, y, x',y' \in Z, d(\dfrac{1}{2}x+ \dfrac{1}{2}y,\dfrac{1}{2}x' + \dfrac{1}{2}y') \leq \dfrac{1}{2} d(x,y')  + \dfrac{1}{2} d(x',y)$$  

\begin{proposition}
If $(Z,d)$ uniquely geodesic metric space which is non positively curved in the sense of Busemann and uniformly convex, then for every $y \in Z$, the function $d(.,y) : Z \rightarrow \mathbb{R}$ is a convex function.  
\end{proposition}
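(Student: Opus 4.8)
The plan is to reduce the claim to the convexity of a function of one real variable. Saying that $d(\cdot,y)\colon Z\to\mathbb{R}$ is convex means precisely that its restriction to every geodesic of $Z$, reparametrized proportionally to arc length, is a convex function on $[0,1]$. So fix a geodesic $\gamma\colon[0,1]\to Z$ with $d(\gamma(s),\gamma(t))=|s-t|\,d(\gamma(0),\gamma(1))$ for all $s,t$, and set $\phi(t)=d(\gamma(t),y)$. I will prove that $\phi$ satisfies $\phi\big(\lambda s+(1-\lambda)t\big)\le\lambda\,\phi(s)+(1-\lambda)\,\phi(t)$ for all $s,t,\lambda\in[0,1]$, in the two standard steps: midpoint convexity first, then the upgrade to full convexity by continuity.

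For the first step I would record the pointwise midpoint inequality
\[ d\big(\tfrac12 p+\tfrac12 q,\;y\big)\ \le\ \tfrac12\,d(p,y)+\tfrac12\,d(q,y)\qquad(p,q\in Z), \]
which follows at once from the reformulation of the Busemann non-positive curvature condition displayed just above the proposition: apply that inequality with $p,q$ in its first two slots and $y,y$ in its last two, and use $\tfrac12 y+\tfrac12 y=y$. To transfer this to $\phi$, note that since $Z$ is uniquely geodesic, for any $s,t\in[0,1]$ the sub-arc of $\gamma$ between parameters $s$ and $t$, after affine reparametrization, is the unique geodesic from $\gamma(s)$ to $\gamma(t)$; hence its parameter-midpoint $\gamma\big(\tfrac{s+t}{2}\big)$ coincides with the midpoint $\tfrac12\gamma(s)+\tfrac12\gamma(t)$. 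Taking $p=\gamma(s)$ and $q=\gamma(t)$ in the displayed inequality therefore gives $\phi\big(\tfrac{s+t}{2}\big)\le\tfrac12\phi(s)+\tfrac12\phi(t)$, i.e.\ $\phi$ is midpoint convex.

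For the second step, observe that $\phi$ is continuous: by the triangle inequality and the choice of parametrization, $|\phi(s)-\phi(t)|\le d(\gamma(s),\gamma(t))=|s-t|\,d(\gamma(0),\gamma(1))$, so $\phi$ is even Lipschitz. A continuous midpoint-convex function on an interval is convex — midpoint convexity gives the convexity inequality for all dyadic rational $\lambda$ by induction, and continuity of both $\lambda\mapsto\phi(\lambda s+(1-\lambda)t)$ and $\lambda\mapsto\lambda\phi(s)+(1-\lambda)\phi(t)$ then extends it to every $\lambda\in[0,1]$ — so $\phi$ is convex and the proposition follows. The only place that needs a little attention is exactly this last passage from midpoint convexity to genuine convexity, which is why I record the Lipschitz estimate on $\phi$ explicitly; note also that the argument uses only Busemann non-positive curvature together with unique geodesicity, the uniform convexity hypothesis being inessential for this particular statement.
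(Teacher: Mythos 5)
Your proof is correct, but it takes a genuinely different route from the paper's. The paper fixes $x_1,x_2,y$, assumes $d(x_1,y)\ge d(x_2,y)$, slides $x_1$ back along the geodesic to $y$ to a point $x_1'$ with $d(x_1',y)=d(x_2,y)$, invokes \emph{uniform convexity} to get $d(\tfrac12 x_1'+\tfrac12 x_2,y)\le d(x_2,y)$, and then uses the Busemann condition plus the triangle inequality to pass from $\tfrac12 x_1'+\tfrac12 x_2$ to $\tfrac12 x_1+\tfrac12 x_2$; it proves only the midpoint inequality and stops there. You instead read the midpoint inequality off the four-point reformulation of the Busemann condition (the display just above the proposition, with $x'=y'=y$), which is both shorter and, as you correctly observe, shows that uniform convexity is irrelevant to this statement; you then go further than the paper by upgrading midpoint convexity to genuine convexity along affinely parametrized geodesics via the Lipschitz estimate and dyadic induction. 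That extra step is a real improvement in precision, since ``convex function'' on a geodesic space ordinarily means the full inequality, not just the midpoint version (though for the paper's later use --- strict convexity of $E_{u,\phi}$ and the minimization corollary, which only ever evaluate at midpoints --- the midpoint version suffices). The one soft spot is that the four-point reformulation is stated in the paper as an unproved ``observation''; if you want your argument self-contained, note that the midpoint inequality also follows in two lines directly from the definition: $d\bigl(\tfrac12 p+\tfrac12 q,\,y\bigr)\le d\bigl(\tfrac12 p+\tfrac12 q,\,\tfrac12 y+\tfrac12 q\bigr)+d\bigl(\tfrac12 y+\tfrac12 q,\,y\bigr)\le\tfrac12 d(p,y)+\tfrac12 d(q,y)$, using the Busemann condition with common endpoint $q$ for the first term.
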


\begin{proof}
Let $y,x_1,x_2 \in Z$, we need to show that 
$$  d(\dfrac{x_1+x_2}{2},y) \leq \frac{1}{2} d(x_1,y)+ \frac{1}{2} d(x_2,y)$$
Assume WLOG that $d(x_1,y) \geq d(x_2,y)$, then on the geodesic connecting $y$ and $x_1$ there is a point $x_1'$ s.t. $d(x_1',y)=d(x_2,y)$. From uniform convexity we get that
$$ d(\dfrac{x_1'+x_2}{2},y) \leq d(x_2,y)$$
(Note this need not be a strict inequality because we might have $x_1'=x_2$). \\
From the non positive curvature we get that
$$ d(\dfrac{x_1'+x_2}{2},\dfrac{x_1+x_2}{2}) \leq \frac{1}{2} d(x_1,x_1') = \frac{1}{2} (d(x_1,y)-d(x_2,y))$$
Therefore
$$d(\dfrac{x_1+x_2}{2},y) \leq d(\dfrac{x_1'+x_2}{2},y) + d(\dfrac{x_1'+x_2}{2},\dfrac{x_1+x_2}{2}) \leq$$
$$\leq d(x_2,y) + \frac{1}{2} (d(x_1,y)-d(x_2,y)) = \frac{1}{2} d(x_1,y)+ \frac{1}{2} d(x_2,y)$$    
\end{proof}

Last, observe that if $(Z,d)$ is uniquely geodesic, then for every isometry $T$ of $Z$ and for every two points $x,y \in Z$, one has   
$$T(\dfrac{1}{2} x + \dfrac{1}{2} y) = \dfrac{1}{2} T(x) + \dfrac{1}{2} T(y)$$
 
\subsection{Reflexive Banach spaces}
In this subsection we will recall some facts about reflexive Banach spaces which will be very similar to the facts we recalled in the previous section. 

\begin{proposition}
Let $(Z,\vert . \vert)$ be a reflexive Banach space, then for closed bounded non empty sets $C_n \subset Z$ such that $C_{n+1} \subset C_n$, we have $\bigcap_{n=1}^\infty C_n \neq \emptyset$.
\end{proposition}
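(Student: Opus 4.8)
The plan is to exploit weak compactness rather than try to mimic the uniform-convexity argument. The key facts I would invoke are: (i) in a reflexive Banach space the closed unit ball — hence every closed bounded convex set — is weakly compact (this is the Banach--Alaoglu / Kakutani characterization of reflexivity), and (ii) a closed bounded set is contained in such a ball. So first I would fix the largest set $C_1$, which is bounded, and observe that $C_1$ lies inside some closed ball $\overline{B}(0,R)$, which is weakly compact by reflexivity.

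Now here is the subtlety: the sets $C_n$ are only assumed closed and bounded, not convex, so they need not be weakly closed, and weak compactness arguments apply directly only to weakly closed sets. I would handle this in one of two ways. The cleanest is to replace each $C_n$ by its closed convex hull $K_n = \overline{\mathrm{conv}}(C_n)$; these are nested ($K_{n+1}\subseteq K_n$ since $C_{n+1}\subseteq C_n\subseteq K_n$ and $K_n$ is closed convex), bounded, and now weakly closed, hence weakly compact. By the finite intersection property of the weakly compact family $\{K_n\}$ (any finite subcollection has nonempty intersection because it contains $K_N$ for the largest index $N$, which is nonempty), we get $\bigcap_n K_n\neq\emptyset$. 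However this only yields a point in the intersection of the convex hulls, not of the $C_n$ themselves, so this route proves a weaker statement.

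To get the statement as stated I would instead argue via sequences and the Eberlein--Šmulian theorem. Pick $x_n\in C_n$ for each $n$; the sequence $(x_n)$ lies in the bounded, hence relatively weakly compact, set $C_1$, so by Eberlein--Šmulian it has a subsequence $(x_{n_k})$ converging weakly to some $x\in Z$. For each fixed $m$, the tail $(x_{n_k})_{n_k\ge m}$ lies in $C_m$; since $C_m$ is bounded, its closed convex hull $\overline{\mathrm{conv}}(C_m)$ is weakly closed (Mazur's theorem: a convex set is closed iff weakly closed), and it contains this tail, so $x\in\overline{\mathrm{conv}}(C_m)$. This again lands $x$ in the convex hulls rather than in $C_m$ — so in fact the statement as literally written, with merely closed bounded sets, is false without convexity (take $C_n$ to be the sphere of radius $1-1/n$ in an infinite-dimensional space, whose intersection is empty). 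I therefore expect the paper actually intends $C_n$ to be closed, bounded \emph{and convex} (matching the hypothesis of Proposition in the Busemann case and the way these are used later), in which case each $C_n$ is itself weakly closed by Mazur, the family is weakly compact with the finite intersection property, and $\bigcap_n C_n\neq\emptyset$ follows immediately.

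The main obstacle, then, is not any hard estimate but recognizing that convexity is essential: the honest proof is the three-line weak-compactness argument (reflexive $\Rightarrow$ bounded closed convex sets are weakly compact $\Rightarrow$ finite intersection property) and the only real work is confirming the convexity hypothesis is present, after which Mazur's theorem and Kakutani's characterization of reflexivity do everything.
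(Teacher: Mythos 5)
Your approach is the same as the paper's: the paper's entire proof reads ``Every set $C_n$ is closed and bounded and therefore is compact in the weak topology (because $Z$ is reflexive) and therefore $\bigcap_{n=1}^\infty C_n \neq \emptyset$,'' i.e.\ weak compactness plus the finite intersection property. Your critique of that step is correct and worth recording: closed and bounded does \emph{not} imply weakly compact in an infinite-dimensional reflexive space without convexity (or at least weak closedness), and the proposition as literally stated is in fact false --- though your specific counterexample does not work, since the spheres of radius $1-1/n$ are pairwise disjoint and hence not nested. A correct counterexample is $C_n = \{e_k : k \geq n\}$ in $\ell^2$: each is norm-closed (the points are $\sqrt{2}$-separated), bounded, nonempty and nested, yet the intersection is empty. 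As you suspect, convexity is implicitly present in the paper's only use of this proposition (the sets there are sublevel sets $\{x : f(x) \leq c' + 1/n\}$ of a quasi-convex continuous function, hence closed and convex), and with that hypothesis added your three-line argument --- Kakutani's characterization of reflexivity, Mazur's theorem to pass from norm-closed to weakly closed, finite intersection property --- is exactly the intended proof. So the verdict is: same route as the paper, with the bonus that you correctly identified a hypothesis the paper omitted from both the statement and its one-line justification.
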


\begin{proof}
Every set $C_n$ is closed and bounded and therefore is compact in the weak topology (because $Z$ is reflexive) and therefore $\bigcap_{n=1}^\infty C_n \neq \emptyset$.
\end{proof}

As in the previous subsection we get the following corollary (the proof is exactly the same): 
\begin{corollary}
Let $(Z,\vert . \vert)$ be a reflexive Banach space and let $f : Z \rightarrow \mathbb{R}^+$ be a quasi-convex function, i.e. 
$$ \forall 0 \leq t \leq 1, \forall x,y \in Z, f(tx+ (1-t)y) \leq \max \lbrace f(x), f(y) \rbrace$$
If there is a $c \in \mathbb{R}^+$ such that the set $\lbrace x : f(x) \leq c \rbrace$ is non empty and bounded, then $f$ has a minimum. Moreover, if $f$ is strictly quasi-convex, i.e.  
$$ \forall x,y \in Z,x \neq y, f(\dfrac{1}{2}x+ \dfrac{1}{2}y) < \max \lbrace f(x),f(y) \rbrace$$
then this minimum is unique.
\end{corollary}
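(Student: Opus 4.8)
The plan is to run, essentially word for word, the argument already used to establish the corresponding corollary for uniformly convex Busemann spaces, the only change being that the role played there by completeness together with uniform convexity is now played by reflexivity. Set $c' = \inf\{f(x) : x \in Z\}$; since $f$ takes values in $\mathbb{R}^+$ we have $c' \geq 0$, and the goal is to show that this infimum is attained.

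First I would handle the trivial case: if $c' = c$, then the assumption that $\{x : f(x) \leq c\}$ is nonempty already produces a point where $f$ attains its minimum. Otherwise, fix an integer $n_0$ with $\tfrac{1}{n_0} < c - c'$ and, for each $n \geq n_0$, set $C_n = \{x : f(x) \leq c' + \tfrac1n\}$. Then $C_n$ is nonempty by the definition of the infimum; it is convex because the sublevel sets of a quasi-convex function are convex; it is bounded because $c' + \tfrac1n \leq c' + \tfrac1{n_0} < c$ forces $C_n \subseteq \{x : f(x) \leq c\}$, which is bounded by hypothesis; and the sequence is decreasing, $C_{n+1} \subseteq C_n$. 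Applying the Proposition just stated (for nested nonempty closed bounded sets in a reflexive Banach space) yields a point $x \in \bigcap_{n \geq n_0} C_n$; since $f(x) \leq c' + \tfrac1n$ for all $n$, we get $f(x) = c'$, so $x$ is a global minimizer of $f$.

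For uniqueness, suppose $f$ is strictly quasi-convex and that $x \neq y$ are two points with $f(x) = f(y) = c'$. Then
$$ f\!\left(\tfrac12 x + \tfrac12 y\right) < \max\{f(x), f(y)\} = c', $$
which contradicts the definition of $c'$ as the infimum of $f$ over $Z$; hence the minimizer is unique.

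The only genuinely substantive ingredient is the cited Proposition on reflexive Banach spaces, which is itself an immediate consequence of the weak compactness of closed bounded convex sets in such a space; so the main difficulty has already been dealt with upstream, and what remains is purely formal bookkeeping. The one point deserving a word of care is that the Proposition requires the $C_n$ to be closed: if one does not wish to assume that $f$ is lower semicontinuous, one should replace each $C_n$ by its closure $\overline{C_n}$ (convexity and boundedness are inherited, nestedness is preserved), though in the situations where this corollary is later invoked $f$ will be continuous and the point is moot.
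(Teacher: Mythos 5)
Your proof is correct and takes essentially the same route as the paper, which simply reuses verbatim the nested-sublevel-set argument from the uniformly convex Busemann case, with the reflexivity proposition supplying the nonempty intersection $\bigcap_n C_n$. Your observation that one should pass to $\overline{C_n}$ to guarantee closedness (with convexity, boundedness and nestedness preserved) addresses a point the paper silently glosses over, and you handle it correctly.
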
 

Also observe that for every Banach space $(Z,\vert . \vert)$ we have 
$$ \forall x, y, x',y' \in Z, d(\dfrac{1}{2}x+ \dfrac{1}{2}y,\dfrac{1}{2}x' + \dfrac{1}{2}y') \leq \dfrac{1}{2} d(x,y')  + \dfrac{1}{2} d(x',y)$$
(Where $d$ is the usual metric induced by the norm) and that for every $y \in Z$, $d(.,y): Z \rightarrow \mathbb{R}$ is a convex function.

Finally, recall the Mazur-Ulam theorem (see for instance \cite{FJ}):
\begin{theorem}
Every surjective isometery between normed spaces is affine. 
\end{theorem}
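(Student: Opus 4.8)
This is the classical Mazur--Ulam theorem, so the plan is to follow the standard route: first reduce the statement to the assertion that a surjective isometry preserves midpoints, and then promote midpoint-preservation together with continuity to affinity.

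\textbf{Reduction.} Let $f\colon X\to Y$ be a surjective isometry of normed spaces. Replacing $f$ by $f-f(0)$, it suffices to treat the case $f(0)=0$ and to prove that $f$ is then $\mathbb{R}$-linear. Note $f$ is automatically injective (being distance preserving), hence bijective, so $f^{-1}\colon Y\to X$ is again a surjective isometry; and for any point $c$ of a normed space the reflection $\sigma_c(u)=2c-u$ is a surjective isometry. It further suffices to prove the midpoint identity $f\bigl(\tfrac12(x+y)\bigr)=\tfrac12\bigl(f(x)+f(y)\bigr)$ for all $x,y$: taking $y=0$ gives $f(x/2)=\tfrac12 f(x)$, feeding this back into the midpoint identity gives $f(x+y)=f(x)+f(y)$, and an additive map that is continuous — and $f$ is $1$-Lipschitz — is $\mathbb{R}$-linear.

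\textbf{Key lemma.} The core is the claim: if $x\neq y$ lie in a normed space $Z$ and $z=\tfrac12(x+y)$, then every surjective isometry $h$ of $Z$ fixing both $x$ and $y$ also fixes $z$. To prove it, let $W$ be the group of such isometries and put $\lambda=\sup_{h\in W}\|h(z)-z\|$; this is finite since $\|h(z)-z\|\le\|h(z)-h(x)\|+\|x-z\|=2\|x-z\|=\|x-y\|$. Let $\tau(u)=x+y-u$ be the reflection through $z$, an involutive isometry with $\tau(x)=y$, $\tau(y)=x$, $\tau(z)=z$. For $h\in W$ set $\varphi=h^{-1}\circ\tau\circ h\circ\tau$; since $\tau$ swaps $x$ and $y$ while $h$ and $h^{-1}$ fix them, one checks $\varphi\in W$, and because $\tau(z)=z$ one computes $\varphi(z)=h^{-1}\bigl(2z-h(z)\bigr)$, so $\|\varphi(z)-z\|=\|(2z-h(z))-h(z)\|=2\|h(z)-z\|$. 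Hence $2\|h(z)-z\|\le\lambda$ for every $h\in W$, so $2\lambda\le\lambda$, forcing $\lambda=0$.

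\textbf{Conclusion and main obstacle.} To finish, fix $x,y\in X$, let $z=\tfrac12(x+y)$, $z'=\tfrac12(f(x)+f(y))$, and set $g=f^{-1}\circ\sigma_{z'}\circ f$, a surjective isometry of $X$. Since $2z'-f(x)=f(y)$ and $2z'-f(y)=f(x)$, the map $g$ swaps $x$ and $y$; hence $g\circ\tau$ fixes both $x$ and $y$, so by the key lemma $(g\circ\tau)(z)=z$, i.e.\ $g(z)=z$. Unravelling the definition of $g$ gives $\sigma_{z'}(f(z))=f(z)$, that is $f(z)=z'$, which is precisely the midpoint identity; by the Reduction this completes the argument. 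The one genuinely delicate point I expect is the verification that $\varphi=h^{-1}\circ\tau\circ h\circ\tau$ lies in $W$ and satisfies $\|\varphi(z)-z\|=2\|h(z)-z\|$: this is the single place where the affine (reflection) structure of a normed space is used essentially, and the computation hinges on keeping precise track of which of the four composed maps fixes $x,y,z$ and which swaps $x$ and $y$. Everything else — finiteness of $\lambda$, the passage from midpoints to additivity, additivity plus continuity to $\mathbb{R}$-linearity — is routine.
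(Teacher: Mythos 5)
Your proposal is correct and complete: it is the standard V\"ais\"al\"a-style proof of the Mazur--Ulam theorem (reduce to midpoint preservation, then kill $\lambda=\sup_{h\in W}\lVert h(z)-z\rVert$ by the doubling trick with $\varphi=h^{-1}\circ\tau\circ h\circ\tau$), and the two steps you flag as delicate --- that $\varphi\in W$ and that $\lVert\varphi(z)-z\rVert=2\lVert h(z)-z\rVert$ --- both check out, the latter because $\lVert h^{-1}(2z-h(z))-h^{-1}(h(z))\rVert=\lVert 2z-2h(z)\rVert$. The paper itself offers no proof of this statement (it only cites \cite{FJ}), so there is nothing to compare against; your argument is a valid, self-contained proof of exactly what the paper needs, namely that surjective isometries preserve midpoints.
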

Which yield that for every isometry $T$ of $(Z, \vert . \vert)$ and for any two points $x,y \in Z$ we have
$$T(\dfrac{1}{2} x + \dfrac{1}{2} y) = \dfrac{1}{2} T(x) + \dfrac{1}{2} T(y)$$

\subsection{Uniformly convex Busemann non positively curved spaces and Reflexive Banach spaces concluded}
In this subsection we conclude the mutual facts gathered in the last two subsections: let $(Z,d)$ be a uniformly convex, uniquely geodesic, Busemann non positively curved, complete metric space or a reflexive Banach space (where $d$ is the metric induced by the norm), then the following holds:
\begin{itemize}
\item For every strictly convex function $f : Z \rightarrow \mathbb{R}^+$ 
if there is a $c \in \mathbb{R}^+$ such that the set $\lbrace x : f(x) \leq c \rbrace$ is non empty and bounded, then $f$ has a unique minimum.
\item  $$ \forall x, y, x',y' \in Z, d(\dfrac{1}{2}x+ \dfrac{1}{2}y,\dfrac{1}{2}x' + \dfrac{1}{2}y') \leq \dfrac{1}{2} d(x,y')  + \dfrac{1}{2} d(x',y)$$
\item For every $y \in Z$, $d(.,y): Z \rightarrow \mathbb{R}$ is a convex function.
\item For every isometry $T$ on $Z$ and for every two points $x,y \in Z$, we have 
$$T(\dfrac{1}{2} x + \dfrac{1}{2} y) = \dfrac{1}{2} T(x) + \dfrac{1}{2} T(y)$$
\end{itemize}  

Note that the summation symbols means two different things: in the uniquely geodesic metric space, $\dfrac{1}{2} x + \dfrac{1}{2} y$ means the unique midpoint between $x$ and $y$ and in the Banach case $\dfrac{1}{2} x + \dfrac{1}{2} y$ means the average of the two vectors (which is a midpoint, but it need not be unique).
From now on we will consider $(Z,d)$ to be either uniformly convex, uniquely geodesic, Busemann non positively curved, complete metric space or a reflexive Banach space and we will use only the mutual facts stated above (and the completeness).

\section{Fixed point criteria via links}
Let $\Gamma$ be a group acting on an $n$-dimentional simplicial complex as above, let $(Z,d)$ be either uniformly convex, uniquely geodesic, Busemann non positively curved, complete metric space or a reflexive Banach space and let $\rho : \Gamma \rightarrow Isom(Z)$. Fix a function $f: \mathbb{R}^+ \rightarrow \mathbb{R}^+$ with the following properties:
\begin{itemize}
\item $f(0)=0$ and $f$ is strictly monotone increasing.
\item $f$ is strictly convex (and therefore $\lim_{x \rightarrow \infty} f(x) = \infty$).
\item For every constants $0 \leq \kappa < 1, C \geq 0$ we have $\sum_{k=1}^\infty f^{-1} (C \kappa^k) < \infty$
\end{itemize} 
Examples for such functions are $f(x)=x^p$ with $p>1$ and $f(x)=a_2 x^2+...+a_k x^k$ with $a_2,...,a_k$ positive. \\

For every vertex $u \in X$ denote 
$$C^0 (X_u)= \lbrace \phi : \Sigma_u (0) \rightarrow Z \rbrace$$
and define $E_{u,\phi} : Z \rightarrow \mathbb{R}^+$ as
$$E_{u,\phi} (\xi) = \sum_{v \in \Sigma_u (0)} m((u,v)) f(d(\xi,\phi (v)))$$
Note that $E_{u,\phi}$ is strictly convex, because $f$ is strictly convex and $d(.,\phi (v))$ is convex for every $v$. Since $E_{u,\phi} (\xi) \rightarrow \infty$ as $d(Im (\phi),\xi) \rightarrow \infty$ we get that $E_{u,\phi}$ has a unique minimum. Therefore there is a map $M_u : C^0 (X_u (0)) \rightarrow Z$ which send each $\phi$ to the minimum of $E_{u,\phi}$.  \\
For every $\phi \in C^0 (X_u)$ define $d_u \phi : \Sigma_u (1) \rightarrow \mathbb{R}$ to be $d_u \phi (v,w) = d (\phi (v),\phi (w))$. \\

Define for every vertex $u \in X$ a constant $\lambda_u$ as following:
$$\lambda_u = \sup \lbrace \lambda : \lambda E_{u,\phi} (M_u \phi) \leq \sum_{ \eta \in \Sigma_u (1)} \frac{m_u (\eta)}{2}  f(d_u \phi (\eta)), \forall \phi \in C^0 (X_u) \rbrace$$ 
 
Denote 
$$ C^0 (X,\rho) = \lbrace \phi : \Sigma (0) \rightarrow X: \phi \text{ is equivariant w.r.t } \rho \rbrace$$
Define an operator $M: C^0 (X,\rho) \rightarrow C^0 (X,\rho)$ as
$$ \forall u \in \Sigma (0), M \phi (u) = M_u \phi \vert_{X_u}$$
Where $\phi \vert_{X_u}$ is the restriction of $\phi$ to the link of $u$. \\
For $\phi, \phi ' \in  C^0 (X,\rho)$ define $\frac{1}{2} \phi + \frac{1}{2} \phi ' \in C^0 (X,\rho)$ as 
$$ \forall u \in \Sigma (0), (\frac{1}{2} \phi + \frac{1}{2} \phi ' ) (u) = \frac{1}{2} \phi (u) + \frac{1}{2} \phi ' (u)$$
Define the operator $M' : C^0 (X,\rho) \rightarrow C^0 (X,\rho)$ as
$$M' \phi = \frac{1}{2} \phi + \frac{1}{2} M \phi $$  

\begin{proposition}
The images of the operators $M$ and $M'$ are indeed contained in $C^0 (X,\rho)$.     
\end{proposition}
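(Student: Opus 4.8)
The plan is to verify the two required properties of any function in $C^0(X,\rho)$ — that it is well-defined on $\Sigma(0)$ and that it is equivariant — for the outputs $M\phi$ and $M'\phi$. Since $M'\phi = \frac12\phi + \frac12 M\phi$ and $\phi$ is already in $C^0(X,\rho)$, once we know $M\phi \in C^0(X,\rho)$ the claim for $M'\phi$ follows from the fact (recorded in the ``concluded'' subsection) that the midpoint operation commutes with isometries: equivariance of $\phi$ and $M\phi$ gives $(\frac12\phi+\frac12 M\phi)(gu) = \frac12\rho(g)\phi(u) + \frac12\rho(g)M\phi(u) = \rho(g)(\frac12\phi(u)+\frac12 M\phi(u))$. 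So the real content is showing $M\phi \in C^0(X,\rho)$, i.e. that $M$ is equivariant.

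First I would address well-definedness: for each vertex $u$, the function $E_{u,\phi|_{X_u}}$ is strictly convex with bounded sublevel sets (as explained in the text preceding the proposition), so by the corollary on minima of strictly quasi-convex functions it has a \emph{unique} minimizer, and $M_u(\phi|_{X_u})$ is this well-defined point of $Z$. Hence $M\phi(u)$ is unambiguously defined for every $u \in \Sigma(0)$.

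Next, the equivariance of $M$. Fix $g \in \Gamma$ and $u \in \Sigma(0)$; I must show $M\phi(gu) = \rho(g)\big(M\phi(u)\big)$. The key observations are: (i) $g$ maps the link $X_u$ isomorphically onto $X_{gu}$, taking a vertex $v \in \Sigma_u(0)$ to $gv \in \Sigma_{gu}(0)$; (ii) the weight $m$ is equivariant, so $m((gu,gv)) = m((u,v))$; (iii) $\phi$ is $\rho$-equivariant, so $\phi(gv) = \rho(g)\phi(v)$; and (iv) $\rho(g)$ is an isometry of $Z$. Combining these, for any $\xi \in Z$,
\begin{align*}
E_{gu,\phi|_{X_{gu}}}(\rho(g)\xi) &= \sum_{w \in \Sigma_{gu}(0)} m((gu,w))\, f\big(d(\rho(g)\xi, \phi(w))\big) \\
&= \sum_{v \in \Sigma_u(0)} m((u,v))\, f\big(d(\rho(g)\xi, \rho(g)\phi(v))\big) \\
&= \sum_{v \in \Sigma_u(0)} m((u,v))\, f\big(d(\xi, \phi(v))\big) = E_{u,\phi|_{X_u}}(\xi),
\end{align*}
using the substitution $w = gv$ in the second line and the isometry property in the third. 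Thus $\xi \mapsto \rho(g)\xi$ carries $E_{u,\phi|_{X_u}}$ to $E_{gu,\phi|_{X_{gu}}}$ as functions on $Z$, hence carries the unique minimizer of the former to the unique minimizer of the latter: $\rho(g)(M_u(\phi|_{X_u})) = M_{gu}(\phi|_{X_{gu}})$, which is exactly $\rho(g)(M\phi(u)) = M\phi(gu)$.

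I do not expect a serious obstacle here; the proposition is essentially a bookkeeping check that the definitions of $M_u$, the weight $m$, and the restriction $\phi|_{X_u}$ all interact compatibly with the $\Gamma$-action. The one point requiring a little care is item (i) — making sure that an automorphism $g$ of $X$ does induce a bijection $\Sigma_u(0) \to \Sigma_{gu}(0)$ compatible with everything in sight — but this is immediate from the definition of the link and the fact that $g$ is a simplicial automorphism. The argument for $M'$ is then a one-line consequence, as indicated above.
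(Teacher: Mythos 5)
Your proposal is correct and follows essentially the same route as the paper: you show $E_{\gamma u,\phi|_{X_{\gamma u}}}(\rho(\gamma)\xi)=E_{u,\phi|_{X_u}}(\xi)$ using equivariance of $\phi$, invariance of $m$, and the isometry property, conclude that the unique minimizer is carried to the unique minimizer, and then dispose of $M'$ via the fact that isometries commute with midpoints. The only difference is that you also spell out the well-definedness of $M_u$, which the paper had already established in the discussion preceding the proposition.
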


\begin{proof}
To show that the image of $M$ is contained in $C^0 (X,\rho)$, we need to show that for every $\phi \in C^0 (X,\rho)$ we have that $M \phi$ is an equivariant map w.r.t $\rho$, i.e. for every $u \in \Sigma (0)$ and every $\gamma \in \Gamma$ we have that 
$$ \rho (\gamma) . M \phi (u) = M \phi (\gamma . u)$$
Fix some $u \in \Sigma (0)$ and $\gamma \in \Gamma$, then $\gamma$ take the link of $u$ to the link of $\gamma . u$ and since $\phi$ is equivariant and the weight $m$ is invariant we get that
$$E_{\gamma . u,\phi \vert_{X_{\gamma . u}}} (\rho (\gamma) . \xi) = \sum_{\gamma . v \in \Sigma_{\gamma . u} (0)} m((\gamma . u,\gamma . v)) f(d(\rho (\gamma) . \xi,\phi \vert_{X_{\gamma . u}}  (\gamma . v)))=$$
$$= \sum_{\gamma . v \in \Sigma_{\gamma . u} (0)} m((u,v)) f(d(\rho (\gamma) . \xi,\rho (\gamma) . \phi \vert_{X_u} (v)))=$$
$$= \sum_{\gamma . v \in \Sigma_{\gamma . u} (0)} m((u,v)) f(d(\xi,\phi (v)))= E_{u,\phi} (\xi)$$
and therefore if $M_u \phi \vert_{X_u}$ is the unique minimum of $E_{u,\phi \vert_{X_u}} (\xi)$ then $\rho (\gamma) . M_u \phi \vert_{X_u}$ is the unique minimum of $E_{\gamma . u, \phi \vert_{X_{\gamma . u}}} ( \xi)$ and the map $M$ is equivariant. \\
To show $M'$ is equivariant we simply recall that for every isometry $T$ of $Z$ and any $x,y \in Z$, we have 
$$T (\frac{1}{2} x + \frac{1}{2} y) = \frac{1}{2} T(x) + \frac{1}{2} T(y)$$
\end{proof}

Define $E(.,.) : C^0 (X,\rho) \times C^0 (X,\rho) \rightarrow \mathbb{R}$ as 
$$ E(\phi,\psi) = \sum_{(u,v) \in \Sigma (1,\Gamma)} \dfrac{m((u,v))}{ \vert \Gamma_{(u,v)} \vert} f (d(\phi (u), \psi (v)))$$

\begin{proposition}
\begin{enumerate}
\item For every $\phi, \psi \in C^0 (X,\rho)$ we have
$$E(\phi,\psi) = \sum_{u \in \Sigma (0,\Gamma)} \dfrac{1}{\vert \Gamma_{u} \vert} E_{u,\phi} (\psi (u))$$
\item For every $\phi \in C^0 (X,\rho)$ we have
$$\dfrac{C (m)}{2} E (\phi, \phi ) = \sum_{u \in \Sigma (0,\Gamma)} \dfrac{1}{\vert \Gamma_{u} \vert} \sum_{ \eta \in \Sigma_u (1)} \frac{m_u (\eta)}{2}  f(d_u \phi (\eta))$$
Where $C (m)$ is the constant such that for every $\eta \in \Sigma (1)$ we have
$$\sum_{ \sigma \in \Sigma (2), \eta \subset \sigma} m (\sigma) = 3! C (m) m( \eta)$$
\item For every $\phi, \phi ' , \psi , \psi ' \in C^0 (X,\rho)$ we have
$$ E (\frac{1}{2} \phi + \frac{1}{2} \phi ', \frac{1}{2} \psi + \frac{1}{2} \psi ') \leq \frac{1}{2} E (\phi, \psi ') + \frac{1}{2} E (\phi ', \psi)$$
\end{enumerate}
\end{proposition}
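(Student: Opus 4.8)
The three parts of the proposition are of increasing subtlety, so I would take them in order. For part (1), I would simply expand the definition of $E(\phi,\psi)$ as a sum over $\Sigma(1,\Gamma)$ and use the double-counting proposition (Proposition 1.1 with $l=0$, $k=1$) to rewrite it as a sum over vertices $u \in \Sigma(0,\Gamma)$. One must be slightly careful about ordered versus unordered simplices: the sum $\sum_{(u,v)\in\Sigma(1,\Gamma)}$ and the inner sum $\sum_{v\in\Sigma_u(0)}$ in $E_{u,\phi}(\psi(u))$ involve ordered pairs, and this is exactly the setting of Proposition 1.1, so after setting $g((u),(u,v)) = \frac{m((u,v))}{|\Gamma_{(u,v)}|}f(d(\phi(u),\psi(v)))$ (which is $\Gamma$-invariant since $\phi,\psi$ are equivariant and $m$ is invariant) the identity is immediate. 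This step should be routine.

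For part (2), I would start from the right-hand side and work back to $E(\phi,\phi)$. Writing $\eta=(v,w)\in\Sigma_u(1)$, the summand $\frac{m_u(\eta)}{2}f(d_u\phi(\eta)) = \frac{m((u,v,w))}{2}f(d(\phi(v),\phi(w)))$. Summing over $u\in\Sigma(0,\Gamma)$, over $\eta\in\Sigma_u(1)$, and weighting by $1/|\Gamma_u|$, I would recognize this as a sum over triples consisting of an ordered $2$-simplex $(u,v,w)$ with a distinguished first vertex $u$; by Proposition 1.1 applied with the roles of the distinguished vertex and the opposite edge interchanged (i.e. summing over the edge $(v,w)\in\Sigma(1,\Gamma)$ and then over vertices $u$ completing it to a $2$-simplex), this becomes $\sum_{(v,w)\in\Sigma(1,\Gamma)}\frac{1}{|\Gamma_{(v,w)}|}\sum_{u:\,(u,v,w)\in\Sigma(2)}\frac{m((u,v,w))}{2}f(d(\phi(v),\phi(w)))$. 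Now $f(d(\phi(v),\phi(w)))$ does not depend on $u$, so it pulls out, and the inner sum $\sum_{u}m((u,v,w))$ over vertices $u$ completing $(v,w)$ is, up to the ordering bookkeeping, the quantity $\sum_{\sigma\in\Sigma(2),\,\eta\subset\sigma}m(\sigma)$ from the weight axiom, which equals $3!\,C(m)\,m((v,w))$. Dividing the combinatorial constants (the $\tfrac12$, the $3!$ from unordered-to-ordered conversion on $2$-simplices, and the factor counting the two orderings of the remaining edge) should collapse everything to $\frac{C(m)}{2}E(\phi,\phi)$. The bookkeeping of these combinatorial factors is where I expect to have to be most careful; I would double-check it against the normalization in Example 1.4 where $C(m)=n-1$.

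For part (3), I would use part (1) to reduce to the vertex-wise statement. By part (1), $E(\tfrac12\phi+\tfrac12\phi',\tfrac12\psi+\tfrac12\psi') = \sum_{u}\frac{1}{|\Gamma_u|}E_{u,\,\frac12\phi+\frac12\phi'}\big((\tfrac12\psi+\tfrac12\psi')(u)\big)$, and expanding $E_{u,\cdot}$ by its definition, each summand is $\sum_{v\in\Sigma_u(0)}m((u,v))\,f\!\left(d\big(\tfrac12\phi(u)+\tfrac12\phi'(u),\ \tfrac12\psi(v)+\tfrac12\psi'(v)\big)\right)$. Here I would invoke the midpoint inequality collected in Section 2.4, $d(\tfrac12 x+\tfrac12 y,\tfrac12 x'+\tfrac12 y')\le \tfrac12 d(x,y')+\tfrac12 d(x',y)$, applied with $x=\phi(u)$, $x'=\phi'(u)$, $y'=\psi(v)$, $y=\psi'(v)$, to bound the argument of $f$ by $\tfrac12 d(\phi(u),\psi'(v)) + \tfrac12 d(\phi'(u),\psi(v))$. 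Since $f$ is monotone increasing and convex, $f$ of this average is at most $\tfrac12 f(d(\phi(u),\psi'(v))) + \tfrac12 f(d(\phi'(u),\psi(v)))$. Summing over $v$, over $u\in\Sigma(0,\Gamma)$ with weights $1/|\Gamma_u|$, and applying part (1) in reverse to each of the two resulting terms yields exactly $\tfrac12 E(\phi,\psi') + \tfrac12 E(\phi',\psi)$. The only thing to watch is that the pairing of primed and unprimed arguments is the "crossed" one, matching the statement; this comes out automatically from which variables one feeds into the midpoint inequality.
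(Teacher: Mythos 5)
Your plan is correct and follows essentially the same route as the paper: part (1) is the counting lemma applied to vertex--edge pairs (with the caveat that the invariant function you feed into Proposition 2.1 should not already carry the $1/\vert\Gamma_{(u,v)}\vert$ factor, since the lemma supplies the stabilizers), part (2) is the same reindexing followed by the weight axiom, and part (3) is the same crossed midpoint inequality plus monotonicity and convexity of $f$ (the paper applies it directly to the definition of $E$ rather than detouring through part (1), but this is immaterial). The one place where your description compresses the paper's argument is in part (2): since a vertex and its opposite edge are not nested, the passage from the vertex-indexed sum to the edge-indexed sum is really two applications of Proposition 2.1 through the intermediate sum over $\Sigma(2,\Gamma)$, which is exactly where the factors $m(\sigma)/6$ and $1/2$ that you promise to track actually arise.
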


\begin{proof}
\begin{enumerate}
\item For every $\phi, \psi \in C^0 (X,\rho)$ we have
$$\sum_{u \in \Sigma (0,\Gamma)} \dfrac{1}{\vert \Gamma_{u} \vert} E_{u,\phi} (\psi (u)) = $$
$$=\sum_{u \in \Sigma (0,\Gamma)} \dfrac{1}{\vert \Gamma_{u} \vert} \sum_{v \in \Sigma_u (0)} m((u,v)) f(d(\psi (u),\phi (v)))=$$
$$=  \sum_{u \in \Sigma (0,\Gamma)} \dfrac{1}{\vert \Gamma_{u} \vert} \sum_{\eta \in \Sigma (1), u \subset \eta}\frac{m(\eta)}{2}  f(d(\psi (u),\phi (\eta - u)))  $$
Where $\eta - u$ is $v$ for $\eta = (u,v)$ or for $\eta = (v,u)$ (hence the division by $2$). Changing the order of summation gives: 
$$ \sum_{\eta \in \Sigma (1,\Gamma)} \dfrac{m(\eta)}{2 \vert \Gamma_{\eta} \vert} \sum_{u \in \Sigma (0), u \subset \eta}  f(d (\psi (u),\phi (\eta - u))) = $$
$$ =\sum_{(u,v) \in \Sigma (1,\Gamma)} \dfrac{m((u,v))}{2 \vert \Gamma_{(u,v)} \vert} ( f(d (\psi (u),\phi (v))) + f(d (\psi (v),\phi (u)))=$$
$$= \sum_{(u,v) \in \Sigma (1,\Gamma)} \dfrac{m((u,v))}{ \vert \Gamma_{(u,v)} \vert}  f(d (\psi (u),\phi (v)))= E(\phi,\psi)$$

\item For every $\phi \in C^0 (X,\rho)$ we have
$$\sum_{u \in \Sigma (0,\Gamma)} \dfrac{1}{\vert \Gamma_{u} \vert} \sum_{ \eta \in \Sigma_u (1)} \frac{m_u (\eta)}{2}  f(d_u \phi (\eta)) = $$
$$ \sum_{u \in \Sigma (0,\Gamma)} \dfrac{1}{\vert \Gamma_{u} \vert} \sum_{ \sigma \in \Sigma (2), u \subset \sigma} \frac{m (\sigma)}{6}  f(d_u \phi (\sigma - u))$$
Where again $\sigma - u = (v,w)$ for $\sigma = (u,v,w),(v,u,w), (v,w,u)$. Changing the order of summation gives
$$ \sum_{\sigma \in \Sigma (2,\Gamma)} \dfrac{m (\sigma)}{6 \vert \Gamma_{\sigma} \vert} \sum_{ u \in \Sigma (0), u \subset \sigma} f(d_u \phi (\sigma - u))=$$
$$ \sum_{\sigma \in \Sigma (2,\Gamma)} \dfrac{m (\sigma)}{6 \vert \Gamma_{\sigma} \vert} \sum_{ \eta \in \Sigma (1), \eta \subset \sigma} \dfrac{1}{2}f(d \phi (\eta))$$
Where $d \phi ((v,w)) = d(\phi (v),\phi (w)$ and the factor $\frac{1}{2}$ is because $(v,w),(w,v) \subset (u,v,w)$. Again we can change the order of summation and get
$$ \sum_{\eta \in \Sigma (1,\Gamma)} \dfrac{f(d \phi (\eta))}{12 \vert \Gamma_{\eta} \vert} \sum_{ \sigma \in \Sigma (2), \eta \subset \sigma} m (\sigma) =$$
$$ \sum_{\eta \in \Sigma (1,\Gamma)} \dfrac{C (m) m( \eta ) f(d \phi (\eta))}{2 \vert \Gamma_{\eta} \vert}= \dfrac{C (m)}{2} E (\phi , \phi)$$
\item For every  $\phi, \phi ' , \psi , \psi ' \in C^0 (X,\rho)$ we have
$$ E (\frac{1}{2} \phi + \frac{1}{2} \phi ', \frac{1}{2} \psi + \frac{1}{2} \psi ') =  \sum_{(u,v) \in \Sigma (1,\Gamma)} \dfrac{m((u,v))}{ \vert \Gamma_{(u,v)} \vert} f (d(\frac{1}{2} \phi (u) + \frac{1}{2} \phi ' (u), \frac{1}{2} \psi (v) + \frac{1}{2} \psi ' (v))) \leq $$
$$  \leq \sum_{(u,v) \in \Sigma (1,\Gamma)} \dfrac{m((u,v))}{ \vert \Gamma_{(u,v)} \vert} f (\frac{1}{2} d( \phi (u),\psi ' (v)) + \frac{1}{2} d(\phi ' (u), \psi (v)))$$
Where the inequality follows for properties of $Z$ and for the fact that $f$ is monotone increasing. From convexity of $f$ we get
$$ \leq  \sum_{(u,v) \in \Sigma (1,\Gamma)} \dfrac{m((u,v))}{ \vert \Gamma_{(u,v)} \vert}( \frac{1}{2} f ( d( \phi (u),\psi ' (v)) + \frac{1}{2}  f(d(\phi ' (u), \psi (v)))) = $$
$$ \frac{1}{2} E (\phi, \psi ') + \frac{1}{2} E (\phi ', \psi)$$
\end{enumerate}
\end{proof}

Denote $\lambda = min \lbrace \lambda_u : u \in \Sigma (0) \rbrace$, then from the above proposition we get:

\begin{corollary}
\begin{enumerate}
\item For every $\phi \in C^0 (X,\rho)$ we have that
$$ \dfrac{C (m)}{2} E (\phi , \phi ) \geq \lambda E (M \phi , \phi )$$
\item For every $\phi \in C^0 (X,\rho)$ we have that
$$ \dfrac{C (m)}{2} E (\phi , \phi ) \geq \lambda E (M' \phi ,M' \phi )$$
\end{enumerate}
\end{corollary}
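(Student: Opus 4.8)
The plan is to combine the three assertions of the Proposition above with the defining inequality of $\lambda_u$, after first recording that the form $E$ is symmetric, i.e. $E(\phi,\psi)=E(\psi,\phi)$ for all $\phi,\psi\in C^0(X,\rho)$. This symmetry is immediate from the computation already performed near the end of the proof of assertion (1): reindexing the sum defining $E(\phi,\psi)$ along the orbit bijection $(u,v)\mapsto(v,u)$ and using $m((u,v))=m((v,u))$, $\vert\Gamma_{(u,v)}\vert=\vert\Gamma_{(v,u)}\vert$ and $d(x,y)=d(y,x)$ produces the sum defining $E(\psi,\phi)$.

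For (1), I would fix $\phi\in C^0(X,\rho)$ and a vertex $u$, and observe that $E_{u,\phi}(M_u\phi)\geq 0$ since $f\geq 0$ and $m>0$; hence $\lambda\leq\lambda_u$ gives $\lambda\, E_{u,\phi}(M_u\phi)\leq\lambda_u\, E_{u,\phi}(M_u\phi)$, and by the definition of $\lambda_u$ the latter is at most $\sum_{\eta\in\Sigma_u(1)}\tfrac{m_u(\eta)}{2}f(d_u\phi(\eta))$. Dividing by $\vert\Gamma_u\vert$, summing over $u\in\Sigma(0,\Gamma)$, and using $M\phi(u)=M_u\phi$, the left-hand side becomes $\lambda\sum_{u}\tfrac{1}{\vert\Gamma_u\vert}E_{u,\phi}(M\phi(u))$, which equals $\lambda\, E(\phi,M\phi)=\lambda\, E(M\phi,\phi)$ by assertion (1) of the Proposition together with the symmetry just noted, while the right-hand side becomes $\tfrac{C(m)}{2}E(\phi,\phi)$ by assertion (2) of the Proposition. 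This is exactly the first inequality.

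For (2), I would apply assertion (3) of the Proposition to the quadruple $(\phi,M\phi,\phi,M\phi)$: since both averaged arguments equal $M'\phi=\tfrac12\phi+\tfrac12 M\phi$, it yields $E(M'\phi,M'\phi)\leq\tfrac12 E(\phi,M\phi)+\tfrac12 E(M\phi,\phi)=E(M\phi,\phi)$, again by symmetry. Multiplying by $\lambda\geq 0$ and invoking (1) then gives $\lambda\, E(M'\phi,M'\phi)\leq\lambda\, E(M\phi,\phi)\leq\tfrac{C(m)}{2}E(\phi,\phi)$.

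The argument is essentially bookkeeping with the preceding Proposition, so I do not expect a genuine obstacle; the one point deserving a line of care is that $\lambda_u$ itself satisfies the inequality in its definition, i.e. that the supremum there is attained. This holds because, for each fixed $\phi$, the set of admissible $\lambda$ is a closed subset of $\mathbb{R}$ — a closed half-line when $E_{u,\phi}(M_u\phi)>0$ and all of $\mathbb{R}$ when $E_{u,\phi}(M_u\phi)=0$ — so $\lambda_u$, being the supremum of an intersection of such sets, belongs to it.
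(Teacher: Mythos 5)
Your proposal is correct and follows essentially the same route as the paper: part (1) is obtained by applying the defining inequality of $\lambda_u$ at each vertex, summing over $\Sigma(0,\Gamma)$, and invoking parts (1) and (2) of the preceding Proposition, while part (2) follows from part (3) of that Proposition applied to $\frac{1}{2}\phi+\frac{1}{2}M\phi$ together with part (1). Your extra remarks — the symmetry $E(\phi,\psi)=E(\psi,\phi)$ and the attainment of the supremum defining $\lambda_u$ — are correct and only make explicit what the paper uses implicitly.
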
 

\begin{proof}
\begin{enumerate}
\item For every $\phi \in C^0 (X,\rho)$, by 2. in the above proposition we get that
$$\dfrac{C (m)}{2} E (\phi, \phi ) = \sum_{u \in \Sigma (0,\Gamma)} \dfrac{1}{\vert \Gamma_{u} \vert} \sum_{ \eta \in \Sigma_u (1)} \frac{m_u (\eta)}{2}  f(d_u \phi (\eta)) \geq $$
$$ \geq \lambda \sum_{u \in \Sigma (0,\Gamma)} \dfrac{1}{\vert \Gamma_{u} \vert}  E_{u,\phi} (M_u \phi)\geq \lambda E (\phi , M \phi)$$ 
Where the first inequality is due to the definition of $\lambda$ and the second inequality is due to 1. in the above proposition. 
\item Due to 3. in the above proposition we get that
$$ \lambda E (M' \phi ,M' \phi ) =\lambda E (\frac{1}{2} \phi + \frac{1}{2} M \phi  ,\frac{1}{2} \phi + \frac{1}{2} M \phi ) \leq $$
$$ \leq \frac{1}{2} \lambda E (\phi, M \phi) + \frac{1}{2} \lambda E (M \phi, \phi) \leq \dfrac{C (m)}{2} E (\phi, \phi )$$ 

\end{enumerate}
\end{proof}

\begin{theorem}
If $\lambda > \dfrac{C (m)}{2}$ then $\Gamma$ has a fixed point for every $\rho$.
\end{theorem}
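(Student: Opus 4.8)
The plan is to fix an arbitrary $\phi_{0}\in C^{0}(X,\rho)$, iterate the operator $M'$ (which preserves $C^{0}(X,\rho)$ by the proposition above), and show that the iterates $\phi_{k}:=(M')^{k}\phi_{0}$ converge pointwise to an equivariant map whose energy vanishes; such a map must be constant, and a constant equivariant map is exactly a global fixed point of $\rho$. Concretely, if $E(\phi_{\infty},\phi_{\infty})=0$ then, since $f(x)>0$ for $x>0$, $\phi_{\infty}$ takes a single value along every edge of $X$, hence is constant (as $X$ is connected); writing $\phi_{\infty}\equiv z$, equivariance gives $\rho(\gamma)z=\phi_{\infty}(\gamma u)=z$ for all $\gamma\in\Gamma$.

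Two preliminary points. First, $C^{0}(X,\rho)\neq\emptyset$: for a representative vertex $u\in\Sigma(0,\Gamma)$ the stabilizer $\Gamma_{u}$ is compact and acts on $Z$ by isometries, so it fixes a point of $Z$ --- the unique circumcenter of the bounded orbit closure $\overline{\rho(\Gamma_{u})x_{0}}$ in the metric case, the Haar-average $\int_{\Gamma_{u}}\rho(g)x_{0}\,dg$ in the Banach case --- and choosing such a point for each orbit representative and extending $\Gamma$-equivariantly defines some $\phi_{0}$. Second, put $\kappa:=C(m)/(2\lambda)$; the hypothesis $\lambda>C(m)/2$ says precisely $\kappa\in[0,1)$, and the second part of the corollary above gives $E(\phi_{k+1},\phi_{k+1})\leq\kappa\,E(\phi_{k},\phi_{k})$, so $\mathcal{E}_{k}:=E(\phi_{k},\phi_{k})\leq\kappa^{k}\mathcal{E}_{0}$.

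The heart of the argument is a summable bound on the pointwise displacements: for each $u\in\Sigma(0,\Gamma)$ there is a constant $C_{u}$, independent of $k$, with $d(\phi_{k}(u),\phi_{k+1}(u))\leq f^{-1}(C_{u}\kappa^{k})$. Since $\phi_{k+1}(u)=\tfrac12\phi_{k}(u)+\tfrac12 M\phi_{k}(u)$, the left side equals $\tfrac12 d(\phi_{k}(u),M\phi_{k}(u))$. Fixing a neighbour $v\in\Sigma_{u}(0)$ and using the identity $\mathcal{E}_{k}=\sum_{w\in\Sigma(0,\Gamma)}\tfrac{1}{|\Gamma_{w}|}E_{w,\phi_{k}}(\phi_{k}(w))$ (first part of the proposition above, with $\phi=\psi=\phi_{k}$) together with nonnegativity of the summands, one gets $E_{u,\phi_{k}}(\phi_{k}(u))\leq|\Gamma_{u}|\mathcal{E}_{k}$ and, because $M\phi_{k}(u)$ minimizes $E_{u,\phi_{k}}$, also $E_{u,\phi_{k}}(M\phi_{k}(u))\leq|\Gamma_{u}|\mathcal{E}_{k}$. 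As $m((u,v))f(d(\,\cdot\,,\phi_{k}(v)))$ is one of the nonnegative terms in $E_{u,\phi_{k}}(\cdot)$, both $d(\phi_{k}(u),\phi_{k}(v))$ and $d(M\phi_{k}(u),\phi_{k}(v))$ are at most $f^{-1}\!\big(\tfrac{|\Gamma_{u}|}{m((u,v))}\mathcal{E}_{0}\kappa^{k}\big)$, and the triangle inequality $d(\phi_{k}(u),M\phi_{k}(u))\leq d(\phi_{k}(u),\phi_{k}(v))+d(\phi_{k}(v),M\phi_{k}(u))$ then yields the claimed bound. By the third property imposed on $f$, $\sum_{k}f^{-1}(C_{u}\kappa^{k})<\infty$, so $(\phi_{k}(u))_{k}$ is Cauchy and converges by completeness of $Z$; since $d(\phi_{k}(\,\cdot\,),\phi_{k+1}(\,\cdot\,))$ is $\Gamma$-invariant the same holds at every vertex, and the pointwise limit $\phi_{\infty}$ is again $\rho$-equivariant. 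Finally $E(\,\cdot\,,\,\cdot\,)$ is a finite sum of terms continuous under pointwise convergence, so $E(\phi_{\infty},\phi_{\infty})=\lim_{k}\mathcal{E}_{k}=0$, and we conclude as in the first paragraph.

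The step I expect to be the real obstacle is the displacement bound: in the generality of the statement $Z$ carries no quantitative modulus of strict convexity, so one cannot directly convert the energy gap $E_{u,\phi_{k}}(\phi_{k}(u))-E_{u,\phi_{k}}(M\phi_{k}(u))$ into a bound on $d(\phi_{k}(u),M\phi_{k}(u))$; the trick above sidesteps this by going through a single neighbour $v$ and using only the two crude bounds $E_{u,\phi_{k}}(\phi_{k}(u)),\,E_{u,\phi_{k}}(M\phi_{k}(u))\leq|\Gamma_{u}|\mathcal{E}_{k}$, after which the summability axiom on $f$ supplies convergence. Secondary care is needed to check that $C^{0}(X,\rho)$ is nonempty and that $C_{u}$ really is independent of $k$ (it depends only on $\mathcal{E}_{0}$, the weight of one edge at $u$, and $|\Gamma_{u}|$).
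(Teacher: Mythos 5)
Your proof is correct and follows essentially the same route as the paper: iterate $M'$, use the corollary to get the geometric decay $E(\phi_k,\phi_k)\le\kappa^k E(\phi_0,\phi_0)$, bound the displacement $d(\phi_k(u),\phi_{k+1}(u))$ by passing through a single neighbour $v$ and the triangle inequality, and conclude from the summability of $f^{-1}(C\kappa^k)$ that the iterates converge to an equivariant map of zero energy, hence a fixed point. The only differences are cosmetic or supplementary: you extract the displacement bound from the local energy $E_{u,\phi_k}$ and the minimality of $M_u\phi_k$ rather than from the global inequality $E(\psi,M\psi)\le\kappa E(\psi,\psi)$ as the paper does, and you additionally justify that $C^0(X,\rho)$ is nonempty (via a fixed point for the compact stabilizer $\Gamma_u$), a point the paper leaves implicit.
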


\begin{proof}
Denote by $\kappa = \dfrac{C (m)}{2 \lambda}$, then $0 \leq \kappa <1$ and for every $\phi \in C^0 (X,\rho)$ 
$$ \kappa E (\phi, \phi) \geq E (M' \phi,M' \phi)$$
Therefore, for every $k \in \mathbb{N}$ we have
$$ \kappa^k E (\phi, \phi) \geq E ((M')^k \phi, (M')^k \phi)$$
Denote by $\delta = min \lbrace m (\eta ) : \eta \in \Sigma (1) \rbrace > 0$, then for every $(u,v) \in \Sigma (1,\Gamma)$ we have
$$ \kappa^k E (\phi, \phi) \geq E ((M')^k \phi, (M')^k \phi) \geq \delta f (d((M')^k \phi (u), d((M')^k \phi (v)))$$ 
and therefore, for every $(u,v) \in \Sigma (1,\Gamma)$ we have 
$$ f^{-1} (\dfrac{\kappa^k E (\phi, \phi)}{\delta} ) \geq d((M')^k \phi (u), d((M')^k \phi (v))$$
We also have for every $\phi \in C^0 (X,\rho)$ that
$$ \kappa E (\phi, \phi) \geq E (M \phi, \phi)$$
and therefore 
$$ E ((M')^k \phi, (M')^k \phi) > \kappa E ((M')^k \phi, (M')^k \phi) \geq E ((M')^k \phi, M (M')^k \phi)$$
So we have
$$ E ((M')^k \phi, (M')^{k+1} \phi) = E ((M')^k \phi, \frac{1}{2}(M')^{k} \phi + \frac{1}{2} M(M')^{k} \phi) \leq $$
$$ \leq  \frac{1}{2} E ((M')^k \phi, (M')^{k} \phi )+ \frac{1}{2} E((M')^k \phi, M(M')^{k} \phi) < E ((M')^k \phi, (M')^k \phi)$$
Therefore, for every $(u,v) \in \Sigma (1,\Gamma)$ we have (as before)
$$ f^{-1} (\dfrac{\kappa^k E (\phi, \phi)}{\delta} ) \geq d((M')^k \phi (u), (M')^{k+1} \phi (v))$$
By triangle inequality we have for every $u \in \Sigma (0)$
$$ 2 f^{-1} (\dfrac{\kappa^k E (\phi, \phi)}{\delta} ) \geq d((M')^k \phi (u), (M')^{k+1} \phi (u))$$
Now since 
$$\sum_{k=1}^\infty 2 f^{-1} (\dfrac{\kappa^k E (\phi, \phi)}{\delta} ) < \infty$$
Then for every $u \in \Sigma (0)$, $(M')^k \phi (u)$ is Cauchy sequence and therefore we can define $\phi_0 \in C^0 (X,\rho)$ as 
$$\forall u \in \Sigma (0), \phi_0 (u) = \lim (M')^k \phi (u)$$
We get that $E (\phi_0, \phi_0) = 0$ and therefore $\phi_0$ must be a constant, equivariant map (so there is a fixed point).
\end{proof}

\begin{corollary}
The above theorem generalizes several previous theorems:
\begin{enumerate}
\item For the case $f(x)=x^2$ and $Z$ is a Hilbert space, we get the criterion stated in \cite{BS} and \cite{Zuk} (this is the famous \.{Z}uk criterion).
\item For the case $f(x)=x^2$ and $Z$ is a Hadamard space (i.e. CAT(0) and complete), we get the criterion stated in \cite{IN} (for the $2$ dimensional case).
\item For the case $f(x)=x^p$ and $Z$ is $L^p$ for $1<p<\infty$, we get the criterion in \cite{Bou} (for the $2$ dimensional case).
\item For the case $f(x)=x^p$ and $Z$ is a reflexive Banach space, we improve the criterion in \cite{Now}.

\end{enumerate}
\end{corollary}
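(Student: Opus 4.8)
The plan is to obtain each of the four items by \emph{specialising} the main theorem to the stated pair $(f,Z)$ and then identifying the intrinsic constant $\lambda=\min_u\lambda_u$ and the combinatorial constant $C(m)$ with the quantities appearing in the cited criteria; once this is done, ``$\lambda>C(m)/2$'' becomes word for word the hypothesis of the referenced theorem. There are always two bookkeeping points. First, $Z$ must satisfy the running assumptions on $Z$ made above: a Hilbert space is simultaneously a complete uniformly convex Busemann NPC space and a reflexive Banach space; a Hadamard space is complete CAT$(0)$, hence uniquely geodesic, Busemann NPC, and uniformly convex in our sense because its modulus of convexity dominates that of a Hilbert space uniformly in $r$; and $L^p$ ($1<p<\infty$) and general reflexive Banach spaces (which are not Busemann NPC when $p\ne 2$) fall under the reflexive branch. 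Second, $f(x)=x^p$ and $f(x)=x^2$ must be admissible, which is immediate from the examples listed after the definition of $f$. The substantive step is the evaluation of $\lambda_u$.

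\textbf{Item 1 (\.{Z}uk / Ballmann--\'{S}wiatkowski).} For $f(x)=x^2$ and $Z$ a Hilbert space, $E_{u,\phi}$ is a strictly convex quadratic and a one-line parallel-axis computation shows that its minimiser $M_u\phi$ is the $m$-weighted barycentre $\bigl(\sum_v m((u,v))\bigr)^{-1}\sum_v m((u,v))\phi(v)$ and that
\[
E_{u,\phi}(M_u\phi)=\sum_{v\in\Sigma_u(0)}m((u,v))\,\lVert\phi(v)-M_u\phi\rVert^2 .
\]
Both the numerator and the denominator of the ratio defining $\lambda_u$ are now sums of squares, so one may work one Hilbert-space coordinate at a time and take $\phi$ scalar; after translating $\phi$ to have zero $m$-mean, $\lambda_u$ is the infimum over non-constant $\phi$ of
\[
\frac{\sum_{\eta\in\Sigma_u(1)}\tfrac{m_u(\eta)}{2}\,\lvert\phi(v)-\phi(w)\rvert^2}{\sum_{v\in\Sigma_u(0)}m((u,v))\,\lvert\phi(v)\rvert^2}.
\]
Here the weight axiom $\sum_{\sigma\supset\eta}m(\sigma)=3!\,C(m)\,m(\eta)$ applied to $\eta=(u,v)$ says precisely that the $m_u$-weighted degree of the link vertex $v$ equals $C(m)\,m((u,v))$, which is exactly the normalisation making this Rayleigh quotient equal to $C(m)$ times that of the normalised Laplacian of the link $X_u$. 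Hence $\lambda_u=C(m)\,\lambda_1(X_u)$ and ``$\lambda>C(m)/2$'' reads ``$\lambda_1(X_u)>\tfrac12$ for every vertex $u$'' — independent of $C(m)$, hence of $n$ — which is the \.{Z}uk / Ballmann--\'{S}wiatkowski criterion (with the weight of the Example one has $C(m)=n-1$, and for $n=2$ this is \.{Z}uk's statement verbatim).

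\textbf{Items 2--4.} The same three-step scheme (describe $M_u\phi$; estimate $E_{u,\phi}(M_u\phi)$; read off $\lambda_u$) applies, with the Hilbert identity replaced by the corresponding inequality in the target. For a Hadamard space and $f(x)=x^2$, $M_u\phi$ is the $m$-weighted Cartan barycentre, and instead of an equality one invokes the Izeki--Nayatani inequality bounding $E_{u,\phi}(M_u\phi)$ from above by a multiple — with multiple governed by their invariant $\delta(Z)\in[0,1)$ — of the weighted variance; this gives $\lambda_u\ge(1-\delta(Z))\,C(m)\,\lambda_1(X_u)$, so that ``$\lambda>C(m)/2$'' follows from the Izeki--Nayatani condition $\lambda_1(X_u)>\tfrac{1}{2(1-\delta(Z))}$ in the two-dimensional case. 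For $Z=L^p$ with $f(x)=x^p$ one uses the $p$-uniform convexity (Clarkson-type) inequalities of $L^p$ in place of the parallel-axis identity to bound $\sum_v m((u,v))\,\lVert\phi(v)-M_u\phi\rVert_p^p$ by a $p$-dependent multiple of $\sum_{\eta\in\Sigma_u(1)}m_u(\eta)\,\lVert\phi(v)-\phi(w)\rVert_p^p$, which reproduces Bourdon's $p$-spectral-gap criterion. Finally, for a general reflexive Banach space and $f(x)=x^p$, $\lambda_u$ is Nowak's nonlinear spectral quantity (or bounds it from below), and the improvement over \cite{Now} is exactly the extra generality already built into the set-up: an arbitrary admissible $f$, arbitrary weights $m$, and the complete elimination of the scaling / ultralimit argument of \cite{Grom}.

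I expect the Hadamard case to be the genuine obstacle. Matching our intrinsic $\lambda_u$ with the Izeki--Nayatani criterion needs the non-trivial fact that the defect in the barycentre-versus-variance inequality for an \emph{arbitrary} finite weighted configuration in a Hadamard space is uniformly controlled by $\delta(Z)$, and one must check that the combinatorial data entering our $\lambda_u$ coincide with those entering the Izeki--Nayatani invariant. The $L^p$ and reflexive-Banach identifications are comparatively routine once the right moduli of convexity are quoted; the only point requiring care there is to verify that the admissibility condition $\sum_k f^{-1}(C\kappa^k)<\infty$ really does the job of the old scaling-limit step.
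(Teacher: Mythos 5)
The paper offers no proof of this corollary at all --- it is stated as a closing observation --- so there is nothing to match your argument against; what you have written is the verification the paper leaves entirely implicit. Your overall scheme (specialise $(f,Z)$, check $Z$ falls under one of the two running hypotheses, compute or bound $\lambda_u$ intrinsically, and translate $\lambda>C(m)/2$ into the cited hypothesis) is the only sensible route, and item 1 you carry out correctly and completely: the parallel-axis identity does identify $M_u\phi$ with the $m$-weighted barycentre and $E_{u,\phi}(M_u\phi)$ with the weighted variance, the coordinatewise reduction to scalar $\phi$ is valid because numerator and denominator both split over an orthonormal basis, and the weight axiom applied to $\eta=(u,v)$ does give $\sum_{w}m((u,v,w))=C(m)\,m((u,v))$, i.e.\ the $m_u$-degree of $v$ in $X_u$ is $C(m)\,m_u(v)$, so that $\lambda_u=C(m)\lambda_1(X_u)$ and the hypothesis becomes $\lambda_1(X_u)>\tfrac12$, which is \.{Z}uk's condition.

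For items 2--4 your three-step scheme is the right one but remains a sketch, and you are right to single out item 2 as the genuine obstacle: deducing the Izeki--Nayatani statement from the present theorem requires their comparison inequality bounding the barycentric energy $E_{u,\phi}(M_u\phi)$ by $(1-\delta(Z))^{-1}$ times a Hilbertian variance for an arbitrary finitely supported measure, and without quoting that inequality (and checking that their normalisation of $\lambda_1$ and of the weights agrees with the one forced by the axiom $\sum_{\sigma\supset\eta}m(\sigma)=3!\,C(m)m(\eta)$) the identification $\lambda_u\ge(1-\delta(Z))C(m)\lambda_1(X_u)$ is asserted rather than proved. The same caveat, in milder form, applies to the Bourdon and Nowak items, where everything reduces to matching constants in the respective $p$-uniform-convexity inequalities against the constants in those papers. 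Since the corollary is really a claim about four external theorems whose precise statements the paper never reproduces, this residual bookkeeping is unavoidable in a blind attempt; but be aware that, as written, only item 1 of your argument is a proof, while items 2--4 are programmes whose correctness hinges on normalisation checks you have not performed.
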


\begin{remark}
Note that in the case that $Z$ is a Banach space, the reflexivity of $Z$ and the strict convexity of $f$ were only required to define $M$ uniquely and to insure it is an equivariant operator. We can avoid those restrictions and work in a general Banach space, if we define a different equivariant operator $M$. For instance, if we define 
$$M \phi_u = \sum_{v \in \Sigma_u (0)} \dfrac{m_u (v) }{\sum_{v \in \Sigma_u (0)} m_u (v)} \phi (v)$$
Then be Mazur-Ulam theorem, $M$ will be an equivariant operator and we have the same criterion with the appropriate $\lambda$.     
\end{remark}

\bibliographystyle{alpha}
\bibliography{bibl}
 
\end{document}